\date{\today}
\newtheorem{theorem}{Теорема}
\newtheorem{proposition}{Твердження}
\newtheorem{corollary}{Наслiдок}
\newtheorem{lemma}{Лема}
\theoremstyle{definition}
\newtheorem{example}{Приклад}
\newtheorem{remark}{Зауваження}
\newtheorem{definition}[theorem]{Означення}
\begin{document}

\title[Про топологiзацiю розширеної бiциклічноi напівгрупи]{Про топологiзацiю розширеної бiциклiчної напiвгрупи}

\author[Олег~Гутік, Маргарита Золотар, Олександра Лисецька]{Олег~Гутік, Маргарита Золотар, Олександра Лисецька}
\address{Львівський національний університет ім. Івана Франка, Університецька 1, Львів, 79000, Україна}
\email{oleg.gutik@lnu.edu.ua, marharyta.zolotar@lnu.edu.ua, oleksandra.lysetska@lnu.edu.ua}

\keywords{Розширена біциклічна напівгрупа, біциклічна напівгрупа, топологічна напівгрупа, напівтопологічна напівгрупа, $T_1$-простір, берівський простір, квазі-регулярний, дискретний}

\subjclass[2020]{22A15, 54C08, 54D10, 54H10.}

\begin{abstract}
На розширеній біциклічній напівгрупі $\mathscr{C}_\mathbb{Z}$ побудовано недискретні напівгрупові $T_1$-то\-по\-ло\-гії. Знайдено топологічні умови, за виконання яких напівгрупова (трансляційно-неперервна) $T_1$-топологія на $\mathscr{C}_\mathbb{Z}$ є дискретною.

\bigskip
\noindent
\emph{Oleg Gutik, Marharyta Zolotar, Oleksandra Lysetska, \textbf{On topologization of the extended bicyclic semigroup}.}

\smallskip
\noindent
Non-discrete semigroup $T_1$-topologies on the extended bicyclic semigroup $\mathscr{C}_\mathbb{Z}$ are constructed. Also, we present topological conditions, under which a semigroup (shift-continuous) $T_1$-topology on $\mathscr{C}_\mathbb{Z}$ is discrete.

\smallskip
\noindent
\emph{Key words and phrases.} Extended bicyclic semigroup, bicyclic semigroup, topological semigroup, semitopological semigroup, $T_1$-space, Baire space, quasi-regular, discrete
\end{abstract}

\maketitle


\section{Вступ}\label{section-1}

У цій праці ми користуємося термінологією з монографій \cite{Clifford-Preston-1961, Clifford-Preston-1967, Lawson-1998, Petrich-1984}.
Надалі у тексті множини натуральних, невід'ємних цілих і цілих чисел  позначатимемо через $\mathbb{N}$, $\omega$ i $\mathbb{Z}$, відповідно.
Надалі, якщо $X$~-- топологічний простір і $Y$~-- підпростір у $X$, то через $\operatorname{cl}_Y(A)$ й $\operatorname{int}_Y(A)$ будемо позначати замикання та внутрішність, відповідно, множини $A\subseteq Y$ у топологічному просторі $Y$.

Якщо $S$~--- напівгрупа, то її підмножину ідемпотентів позначатимемо через $E(S)$.  На\-пів\-гру\-па $S$ називається \emph{інверсною}, якщо для довільного її елемента $x$ існує єдиний елемент $x^{-1}\in S$ такий, що $xx^{-1}x=x$ та $x^{-1}xx^{-1}=x^{-1}$ \cite{Petrich-1984, Vagner-1952}. В інверсній напівгрупі $S$ вище означений елемент $x^{-1}$ називається \emph{інверсним до} $x$, а відображення $S\to S$, яке ставить кожному елементові напівгрупи $S$ інверсний до нього, називається \emph{інверсією}. \emph{В'язка}~-- це напівгрупа ідемпотентів, а \emph{напівґратка}~--- це комутативна в'язка.

Якщо $S$~--- напівгрупа, то на $E(S)$ визначено частковий порядок:
$
e\preccurlyeq f
$   тоді і лише тоді, коли
$ef=fe=e$.
Так означений частковий порядок на $E(S)$ називається \emph{при\-род\-ним}.

Означимо відношення $\preccurlyeq$ на інверсній напівгрупі $S$ так:
$
    s\preccurlyeq t
$
тоді і лише тоді, коли $s=te$, для деякого ідемпотента $e\in S$. Так означений частковий порядок назива\-єть\-ся \emph{при\-род\-ним част\-ковим порядком} на інверсній напівгрупі $S$~\cite{Vagner-1952}. Очевидно, що звуження природного часткового порядку $\preccurlyeq$ на інверсній напівгрупі $S$ на її в'язку $E(S)$ є при\-род\-ним частковим порядком на $E(S)$.

Нехай $X$, $Y$ і $Z$ -- топологічні простори. Відображення $f\colon X\times Y\to Z$, $(x,y)\mapsto f(x,y)$, називається:
\begin{itemize}
  \item[$(i)$] \emph{неперервним справа} [\emph{зліва}], якщо  $f$ неперервне по правій [лівій] змінній; тобто для кожної фіксованої точки $x_0\in X$ [$y_0\in Y$] відображення $Y\to Z$, $y\mapsto f(x_0,y)$ [$X\to Z$, $x\mapsto f(x,y_0)$] неперервне;
  \item[$(ii)$] \emph{нарізно неперервним}, якщо $f$ одночасно неперервне справа та неперервне зліва;
  \item[$(iii)$] \emph{неперервним}, якщо $f$ неперервне як відображення з добутку просторів $X\times Y$ у простір $Z$.
\end{itemize}

\begin{definition}[\cite{Carruth-Hildebrant-Koch=1983, Ruppert=1984}]
Нехай $S$ -- топологічний простір з визначеною на ньому напівгруповою операцією $\mu\colon S\times S\to S$, $(x,y)\mapsto \mu(x,y)=xy$. Тоді пара  $(S,\mu)$ називається:
\begin{itemize}
  \item[$(i)$]  \emph{правою топологічною напівгрупою}, якщо відображення $\mu$ неперервне справа, тобто всі внут\-рішні ліві зсуви $\lambda_s\colon S\to S$, $x\mapsto sx$, є неперервними відображеннями, $s\in S$;
  \item[$(ii)$] \emph{лівою топологічною напівгрупою}, якщо відображення $\mu$ неперервне зліва, тобто всі внут\-ріш\-ні праві зсуви $\rho_s\colon S\to S$, $x\mapsto xs$, є неперервними відображеннями, $s\in S$;
  \item[$(iii)$] \emph{напівтопологічною напівгрупою}, якщо відображення $\mu$ нарізно неперервне;
  \item[$(iv)$] \emph{топологічною напівгрупою}, якщо відображення $\mu$ неперервне.
\end{itemize}

Зазвичай ми опускаємо запис $\mu$ і пишемо просто $S$ замість $(S,\mu)$. Очевидно, що кожна топологічна напівгрупа також є напівтопологічною, і кожна напівтопологічна напівгрупа є одночасно правою та лівою топологічною напівгрупою.
\end{definition}

Топологія $\tau$ на напівгрупі $S$ називається:
\begin{itemize}
  \item \emph{напівгруповою}, якщо $(S,\tau)$ -- топологічна напівгрупа;
  \item \emph{інверсною}, якщо інверсія в $(S,\tau)$ неперевна;
  \item \emph{трансляційно-неперервною}, якщо $(S,\tau)$ -- напівтопологічна напівгрупа;
  \item \emph{неперервною зліва}, якщо $(S,\tau)$ -- ліва топологічна напівгрупа;
  \item \emph{неперервною справа}, якщо $(S,\tau)$ -- права топологічна напівгрупа.
\end{itemize}

Нагадаємо (див.  \cite[\S1.12]{Clifford-Preston-1961}, що \emph{біциклічною напівгрупою} (або \emph{біциклічним моноїдом}) ${\mathscr{C}}(p,q)$ називається напівгрупа з одиницею, породжена двоелементною мно\-жи\-ною $\{p,q\}$ і визначена єдиним  співвідношенням $pq=1$. Кожний елемент біциклічної напівгрупи ${\mathscr{C}}(p,q)$ єдиним чином зображається у вигляді $b^ia^j$, $i,j\in\omega$, а напівгрупова операція на ${\mathscr{C}}(p,q)$ визначається за формулою
\begin{equation*}
  b^ia^j\cdot b^ka^l=
  \left\{
    \begin{array}{ll}
      b^{i-j+k}a^l, & \hbox{якщо~} j<k; \\
      b^ia^l        , & \hbox{якщо~} j=k; \\
      b^ia^{j-k+l}, & \hbox{якщо~} j>k,
    \end{array}
  \right.
\end{equation*}
$i,j,k,l\in\omega$.

Біциклічна на\-пів\-група відіграє важливу роль у теорії
на\-півгруп. Так, зокрема, класична теорема О.~Ан\-дерсена \cite{Andersen-1952}  стверджує, що {($0$-)}прос\-та напівгрупа з (ненульовим) ідем\-по\-тен\-том є цілком {($0$-)простою} тоді і лише тоді, коли вона не містить ізоморфну копію бі\-циклічного моноїда. Різні розширення та узагальнення біциклічного моноїда вводилися раніше різ\-ни\-ми авторами \cite{Gutik-Mykhalenych-2020, Fortunatov-1976, Fotedar-1974, Fotedar-1978, Gutik-Pagon-Pavlyk=2011, Warne-1967}. Такими, зокрема, є конструкції Брука та Брука--Рейлі занурення напівгруп у прості та описання інверсних біпростих і $0$-біпростих $\omega$-напівгруп \cite{Bruck-1958, Reilly-1966, Warne-1966, Gutik-2018}.

\begin{remark}\label{remark-10}
Легко бачити, що біциклічний моноїд ${\mathscr{C}}(p,q)$ ізоморфний напівгрупі, заданій на множині $\boldsymbol{B}_{\omega}=\omega\times\omega$ з напівгруповою операцією
\begin{equation}\label{eq-1}
  (i_1,j_1)\cdot(i_2,j_2)=
\left\{
  \begin{array}{ll}
    (i_1-j_1+i_2,j_2), & \hbox{якщо~} j_1\leqslant i_2;\\
    (i_1,j_1-i_2+j_2), & \hbox{якщо~} j_1\geqslant i_2,
  \end{array}
\right.
\end{equation}
стосовно відображення $\mathfrak{I}\colon {\mathscr{C}}(p,q)\to \boldsymbol{B}_{\omega}$, $b^ia^j\mapsto (i,j)$, $i,j\in\omega$.
\end{remark}

Загальновідомо, що топологічна алгебра вивчає вплив топологічних властивостей своїх об'єктів на їхні алгебраїчні властивості та вплив алгебраїчних властивостей своїх об'єктів на їхні топологічні властивості. У топологічній алгебрі є дві основні задачі: задача недискретної топологізації та задача вкладення в об'єкти з деякими топологічно-алгебраїчними властивостями.

\smallskip

У математичній літературі питання про недискретну (гаусдорфову) топологізацію було поставлене Марковим \cite{Markov=1945}.
Понтрягін навів добре відомі умови бази в одиниці групи для її недискретної топологізації (див.  теорему~3.9 у \cite{Pontryagin=1966}).
Різні автори уточнили питання Маркова: \emph{чи можна ізоморфно занурити задану нескінченну групу $G$, наділену недискретною груповою топологією, у компактну тополо\-гіч\-ну групу?} Знову ж таки, для довільної абелевої групи $G$ відповідь ствердна, але існує неабелева топологічна група, яку не можна ізоморфно занурити в жодну компактну топологічну групу ({див. розділ~9 у \cite{HBSTT-1984}}).

\smallskip

Також, Ольшанський \cite{Olshansky=1980} побудував нескінченну зліченну групу $G$ таку, що кожна групова гаусдорфова топологія на $G$ дискретна. Тайманов у 1973 році побудував комутативну напівгрупу $\mathfrak{T}$, яка допускає лише дискретну напівгрупову гаусдорфову топологію \cite{Taimanov=1973}, та вказав у \cite{Taimanov=1975} достатні умови на комутативну напівгрупу, щоб на ній існувала недискретна гаусдорфова напівгрупова топологія. У \cite{Gutik=2016} доведено, що кожна $T_1$-топологія з неперервними зсувами на $\mathfrak{T}$ є дискретною. Однак, вперше напівгрупу на якій кожна гаусдорфова напівгрупова топологія є дискретною було опубліковано в праці Ебергарта та Селдена \cite{Eberhart-Selden=1969}, і нею виявилась біциклічна напвгрупа ${\mathscr{C}}(p,q)$. Берт\-ман і Вест у \cite{Bertman-West=1976} довели, що кожна гаусдорфова трансляційно-неперервна топологія на ${\mathscr{C}}(p,q)$ дискретна. У \cite{Chornenka-Gutik=2023} побудовано приклади недискретних інверсних напівгрупових і трансляційно-неперервних $T_1$-топологій на біциклічному моноїді ${\mathscr{C}}(p,q)$ та вказано умови за виконання яких напівгрупова (трансляційно-неперервна) $T_1$-топологія на ${\mathscr{C}}(p,q)$ є дискретною. Недискретна топологізація піднапівгруп біциклічного моноїда досліджувалася в праці \cite{Chornenka-Gutik=2026}. У \cite{Gutik-2024} доведено, що кожна неперервна зліва (неперервна справа) гаусдорфова топологія на верхній піднапівгрупі ${\mathscr{C}}_+(p,q)$ (нижній піднапівгрупі ${\mathscr{C}}_-(p,q)$) біциклічного моноїда дискретна, а також побудовано на ${\mathscr{C}}_+(p,q)$ (${\mathscr{C}}_-(p,q)$) неперервну справа (неперервну зліва) гаусдорфову топологію. Крім того в \cite{Gutik-2026} доведено, що напівгрупа ${\mathscr{C}}_+(p,q)$ (${\mathscr{C}}_-(p,q)$) містить континуум неізоморфних піднапівгруп $S_\alpha$ ($\alpha\in\mathfrak{c}$) з такими властивостями: $(i)$~кожна гаусдорфова неперервна зліва (неперервна справа) топологія на $S_\alpha$ дискретна; $(ii)$~на $S_\alpha$ існує недискретна гаусдорфова  неперервна справа (неперервна зліва) топологія; $(iii)$~існує компактна гаусдофова топологічна напівгрупа, яка містить напівгрупу ${\mathscr{C}}_+(p,q)$ (${\mathscr{C}}_-(p,q)$).

Ворн у \cite{Warne-1967}, досліджуючи структуру біпростих інверсних напівгруп, піднапівгрупа ідемпотентів яких ізоморфна напівґратці $(\mathbb{Z},\max)$, ввів \emph{розширену біциклічну напівгрупу} як множину $\mathscr{C}_\mathbb{Z}=\mathbb{Z}\times\mathbb{Z}$ з напівгруповою операцією, визначеною за формулою \eqref{eq-1}.

Біциклічний моноїд ${\mathscr{C}}(p,q)$ та розширена біциклічна напівгрупа $\mathscr{C}_\mathbb{Z}$ мають багато спільних алгебричних властивостей. Зокрема ці дві напівгрупи біпрості й інверсні, а їхна піднапівгрупи ідемпотентів є лінійно впорядкованими нещільними множинами стосовно природного часткового порядку. Тому виникає природне запитання: \emph{які топологічні властивості схожі до біциклічного моноїда має розширена біциклічна напівгрупа як тополого-алгебрична структура?} Перший результат у цьому напрямку було отримано в праці \cite{Fihel-Gutik-2011}, де було доведено, що кожна транслційно-неперервна гаусдорфова топологія на $\mathscr{C}_\mathbb{Z}$ дискретна.

Ця праця частково відповідає на поставлене вище запитання. На $\mathscr{C}_\mathbb{Z}$ побудовано недискретну локально компактну $T_1$-топологію, а також неперервну зліва (справа) берівську $T_1$-топологію. Також вказано умови, за виконання яких $T_1$-топологія на напівгрупі $\mathscr{C}_\mathbb{Z}$ є дискретною.

\section{Приклади недискретних топологій на $\mathscr{C}_\mathbb{Z}$}\label{section-2}

\begin{example}\label{example-2.1}
Визначимо топологію $\tau_1$ на $\mathscr{C}_\mathbb{Z}$ так. Нехай $(i,j)$~-- довільний елемент напівгрупи $\mathscr{C}_\mathbb{Z}$. Для кожного натурального числа $n$ позначимо $U_n(i,j)=\{(i,j)\}\cup\{(s,t)\in\mathscr{C}_\mathbb{Z}\colon s,t\geqslant n\}$. Нехай $\mathscr{B}_1(i,j)=\{U_n(i,j)\colon n\in\mathbb{N}\}$ -- система відкритих околів у точці $(i,j)$. Тоді сім'я  $\mathscr{B}_1=\bigcup_{i,j\in\mathbb{Z}}\mathscr{B}_1(i,j)$ задовольняє умови (BP1)--(BP3) з \cite{Engelking=1989}, а отже, породжує деяку топологію на $\mathscr{C}_\mathbb{Z}$. Позначимо цю топологію через $\tau_1$.

Очевидно, що  $\tau_1$ -- недискретна $T_1$-топологія на  $\mathscr{C}_\mathbb{Z}$.
\end{example}

\begin{proposition}\label{proposition-2.2}
$(\mathscr{C}_\mathbb{Z},\tau_1)$~-- топологічна інверсна напівгрупа.
\end{proposition}

\begin{proof}
Зафіксуємо довільне натуральне число $n$. З визначення напівгрупової операції на $\mathscr{C}_\mathbb{Z}$ випливає, що для довільних елементів $(i_1,j_1)$ та  $(i_2,j_2)$ напівгрупи $\mathscr{C}_\mathbb{Z}$ виконуються включення
\begin{equation*}
  U_m(i_1,j_1)\cdot U_m(i_2,j_2)\subseteq U_n(k,l) \qquad \hbox{i} \qquad (U_n(i_1,j_1))^{-1}\subseteq U_n(j_1,i_1),
\end{equation*}
де $(k,l)=(i_1,j_1)\cdot(i_2,j_2)$ i $m\geqslant 2n+|i_1|+|j_1|+|i_2|+|j_2|$. Отже, напівгрупова операція та інверсія неперервні в $(\mathscr{C}_\mathbb{Z},\tau_1)$.
\end{proof}

Для природного часткового порядку $\preccurlyeq$ на розширеній біциклічній напівгрупі $\mathscr{C}_\mathbb{Z}$ і для довільного елемента $(i,j)\in\mathscr{C}_\mathbb{Z}$ означимо:
\begin{align*}
  {\uparrow_{\preccurlyeq}}(i,j)         &=\left\{(s,t)\in{\mathscr{C}}_\mathbb{Z}\colon (i,j)\preccurlyeq (s,t)\right\}; \\
  {\downarrow_{\preccurlyeq}}(i,j)       &=\left\{(s,t)\in{\mathscr{C}}_\mathbb{Z}\colon (s,t)\preccurlyeq (i,j)\right\}; \\
  {\updownarrow_{\preccurlyeq}}(i,j)     &={\uparrow_{\preccurlyeq}}(i,j)\cup {\downarrow_{\preccurlyeq}}(i,j); \\
  {\downarrow_{\preccurlyeq}^\circ}(i,j) &={\downarrow_{\preccurlyeq}}(i,j)\setminus\left\{(i,j)\right\}.
\end{align*}

Наступне твердження описує природний частковий порядок $\preccurlyeq$ на розширеній бі\-цик\-лічній напівгрупі $\mathscr{C}_\mathbb{Z}$ і випливає з означення відношення $\preccurlyeq$ і твердження~2.1$(vi)$ \cite{Fihel-Gutik-2011}.

\begin{proposition}\label{proposition-2.3}
Для елементів $(i,j)$ і $(s,t)$ розширеної біциклічної напівгрупи $\mathscr{C}_\mathbb{Z}$ такі умови еквівалентні:
\begin{enumerate}
  \item[$(i)$]   $(i,j)\preccurlyeq (s,t)$;
  \item[$(ii)$]  $i\geqslant s$ та $i-j=s-t$;
  \item[$(iii)$] $j\geqslant t$ та $i-j=s-t$.
\end{enumerate}
\end{proposition}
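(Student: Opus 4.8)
The plan is to reduce the statement to a direct computation with the multiplication \eqref{eq-1}, using the defining description of $\preccurlyeq$ as right multiplication by an idempotent. The first step is to pin down the idempotents of $\mathscr{C}_\mathbb{Z}$. For an element $(a,b)$ the product $(a,b)\cdot(a,b)$ equals $(2a-b,b)$ when $b\leqslant a$ and $(a,2b-a)$ when $b\geqslant a$; in both branches the equation $(a,b)\cdot(a,b)=(a,b)$ forces $a=b$, so $E(\mathscr{C}_\mathbb{Z})=\{(a,a)\colon a\in\mathbb{Z}\}$. This is the only structural input the argument needs.

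Next I would dispose of the equivalence $(ii)\Leftrightarrow(iii)$ on purely arithmetic grounds: rewriting $i-j=s-t$ as $i-s=j-t$ shows that, whenever this common equality holds, $i\geqslant s$ is the same condition as $j\geqslant t$. It therefore remains to prove $(i)\Leftrightarrow(iii)$. For $(iii)\Rightarrow(i)$ I would take the idempotent $(j,j)$; since $j\geqslant t$, the first branch of \eqref{eq-1} applies to $(s,t)\cdot(j,j)$ and yields $(s-t+j,j)=(i,j)$, the last equality using $i=s-t+j$, so by the definition $s\preccurlyeq t\Leftrightarrow s=te$ we get $(i,j)\preccurlyeq(s,t)$. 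Conversely, for $(i)\Rightarrow(iii)$, I would write $(i,j)=(s,t)\cdot(a,a)$ and split on \eqref{eq-1}: the branch $t\geqslant a$ returns $(s,t)$ itself, so $(i,j)=(s,t)$ and $(iii)$ holds trivially, while the branch $t\leqslant a$ returns $(s-t+a,a)$, whence $j=a\geqslant t$ and $i-j=s-t$, which is exactly $(iii)$.

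I expect the only real care to be needed in handling the case split of \eqref{eq-1}. In the direction $(iii)\Rightarrow(i)$ the point is to select the correct idempotent, namely $(j,j)$, and to notice that the hypothesis $j\geqslant t$ is precisely what places the product $(s,t)\cdot(j,j)$ in the first branch; in the direction $(i)\Rightarrow(iii)$ the point is that \emph{both} branches of \eqref{eq-1} lead to $(iii)$, the nontrivial one giving $j=a$ and $i-j=s-t$ at once. Once the idempotents are identified the remainder is routine bookkeeping, and the built-in symmetry between $(ii)$ and $(iii)$ means a single such computation (equivalently, one could use $s=ft$ and recover $(ii)$ directly) suffices.
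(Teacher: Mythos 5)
Your proof is correct: the identification $E(\mathscr{C}_\mathbb{Z})=\{(a,a)\colon a\in\mathbb{Z}\}$, the arithmetic equivalence of $(ii)$ and $(iii)$, and the two case splits on \eqref{eq-1} are all accurate and together give a complete argument. The paper itself omits the proof, stating only that the claim follows easily from the definition of $\preccurlyeq$ and referring to Proposition~2.1$(vi)$ of \cite{Fihel-Gutik-2011}; your direct computation is precisely the routine verification being alluded to there, so there is nothing to reconcile.
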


Надалі нам часто буде потрібний такий простий результат:

\begin{lemma}\label{lemma-2.4}
Нехай $S$~-- інверсна ліва (права) $T_1$-топологічна напівгрупа. Тоді для довільного елемента $x$ напівгрупи $S$ множина ${\uparrow_{\preccurlyeq}}x$ є замкненою в просторі $S$
.\end{lemma}

\begin{proof}
 З  леми~1.4.6 з \cite{Lawson-1998} випливає, що
\begin{equation*}
  {\uparrow_{\preccurlyeq}}x=\{s\in S\colon s\cdot xx^{-1}=x\}=\{s\in S\colon xx^{-1}\cdot s=x\},
\end{equation*}
а отже, ${\uparrow_{\preccurlyeq}}x=\rho_{xx^{-1}}^{-1}(x)=\lambda_{xx^{-1}}^{-1}(x)$.
Позаяк праві (ліві) зсуви в $S$ неперервні i $S$~-- $T_1$-простір, то множина ${\uparrow_{\preccurlyeq}}x=\rho_{xx^{-1}}^{-1}(x)$ (${\uparrow_{\preccurlyeq}}x=\lambda_{xx^{-1}}^{-1}(x)$)~-- замкнена в $S$.
\end{proof}

\begin{example}\label{example-2.4}
Визначимо топологію $\tau_2$ на $\mathscr{C}_\mathbb{Z}$ так. Для довільного елемента $(i,j)$ напівгрупи $\mathscr{C}_\mathbb{Z}$ і для довільного невід'ємного цілого числа $n$ позначимо $V_n(i,j)=\{(i,j)\}\cup{\downarrow_{\preccurlyeq}^\circ}(i+n,j+n)$. Нехай $\mathscr{B}_2(i,j)=\{V_n(i,j)\colon n\in\mathbb{N}\}$ -- система відкритих околів у точ\-ці $(i,j)$. Тоді сім'я  $\mathscr{B}_2=\bigcup_{i,j\in\mathbb{Z}}\mathscr{B}_2(i,j)$ задовольняє умови (BP1)--(BP3) з \cite{Engelking=1989}, а отже, породжує деяку топологію на $\mathscr{C}_\mathbb{Z}$. Позначимо цю топологію через $\tau_2$.

Із побудови топології $\tau_2$ на $\mathscr{C}_\mathbb{Z}$ випливає, що $(\mathscr{C}_\mathbb{Z},\tau_2)$~-- $T_1$-простір. Очевидно, що кожний елемент $V_n(i,j)$ сім'ї $\mathscr{B}_2(i,j)$ є компактною підмножиною в $(\mathscr{C}_\mathbb{Z},\tau_2)$, а отже, простір $(\mathscr{C}_\mathbb{Z},\tau_2)$ -- локально компактний.
\end{example}

\begin{proposition}\label{proposition-2.5}
$(\mathscr{C}_\mathbb{Z},\tau_2)$~-- топологічна інверсна напівгрупа.
\end{proposition}

\begin{proof}
З визначення напівгрупової операції на напівгрупі $\mathscr{C}_\mathbb{Z}$ випливає, що для довільних елементів $(i_1,j_1)$ та $(i_2,j_2)$ напівгрупи $\mathscr{C}_\mathbb{Z}$, для довільного невід'ємного цілого числа $n$ і натурального числа $m\geqslant 2n+|j_1|+|i_2|$ виконуються включення
\begin{equation*}
  V_m(i_1,j_1)\cdot V_m(i_2,j_2)\subseteq V_n(k,l) \qquad \hbox{i} \qquad (V_n(i_1,j_1))^{-1}\subseteq V_n(j_1,i_1),
\end{equation*}
де $(k,l)=(i_1,j_1)\cdot(i_2,j_2)$. Отже, $(\mathscr{C}_\mathbb{Z},\tau_2)$~-- топологічна інверсна напівгрупа.
\end{proof}

Нагадаємо \cite{Engelking=1989}, що топологічний простір $X$ називається \emph{зліченно компактним}, якщо кожне зліченне відкрите покриття простору $X$ містить скінченне підпокриття.

У \cite{Chornenka-Gutik=2023} доведено, що на біциклічній напівгрупі ${\mathscr{C}}(p,q)$ ко-скінченна топологія є транс\-ля\-цій\-но-неперервною та інверсія стосовно неї на ${\mathscr{C}}(p,q)$ також є неперервною, а отже, на біциклічній напівгрупі існує компактна трансляційно-неперервна $T_1$-топологія. З твердження~\ref{proposition-2.6} випливає, що аналогічне твердження не виконується для розширеної біциклічної напівгрупи.

\begin{proposition}\label{proposition-2.6}
Не існує зліченно компактної неперервної зліва (справа) $T_1$-топології на $\mathscr{C}_\mathbb{Z}$.
\end{proposition}

\begin{proof}
Нехай $\tau$~-- неперервна зліва $T_1$-топологія на $\mathscr{C}_\mathbb{Z}$. Для довільного елемента $(i,j)$ напівгрупи $\mathscr{C}_\mathbb{Z}$ позначимо через $\rho_{(i,j)}$ правий внутрішній зсув у напівгрупі $\mathscr{C}_\mathbb{Z}$ на елемент $(i,j)$, тобто $\rho_{(i,j)}(x,y)=(x,y)\cdot (i,j)$ для всіх $(x,y)\in\mathscr{C}_\mathbb{Z}$. Оскільки  $(\mathscr{C}_\mathbb{Z},\tau)$~-- $T_1$-простір, то за лемою~\ref{lemma-2.4}, ${\uparrow_{\preccurlyeq}}(i,i)=\rho^{-1}_{(i,i)}(i,i)$~-- зам\-кнена підмножина в $(\mathscr{C}_\mathbb{Z},\tau)$, для довільного ідемпотента $(i,i)$ напівгрупи $\mathscr{C}_\mathbb{Z}$. Тоді $\mathscr{U}=\{U_i=\mathscr{C}_\mathbb{Z}\setminus {\uparrow_{\preccurlyeq}}(i,i)\colon i\in\mathbb{Z}\}$~-- зліченне відкрите покриття простору $(\mathscr{C}_\mathbb{Z},\tau)$, яке не містить скінченне підпокриття.

У випадку неперервної справа топології на $\mathscr{C}_\mathbb{Z}$ доведення аналогічне.
\end{proof}

\begin{lemma}\label{lemma-2.7}
Для довільних елементів $(i,j)$ i $(k,l)$ розширеної біциклічної напівгрупи $\mathscr{C}_\mathbb{Z}$ виконуються такі включення
\begin{equation*}
  {\uparrow_{\preccurlyeq}}(i,j)\cdot (k,l)\subseteq {\uparrow_{\preccurlyeq}}(m,n)\qquad \mbox{i} \qquad
  (i,j)\cdot{\uparrow_{\preccurlyeq}}(k,l)\subseteq{\uparrow_{\preccurlyeq}}(m,n),
\end{equation*}
де $(m,n)=(i,j)\cdot(k,l)$.
\end{lemma}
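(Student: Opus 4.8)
The plan is to use that $\mathscr{C}_\mathbb{Z}$ is an inverse semigroup --- the inverse of $(i,j)$ is $(j,i)$, and $\preccurlyeq$ is exactly its natural partial order. The shortest route is then the general fact that the natural partial order of an inverse semigroup is compatible with multiplication on either side: $a\preccurlyeq b$ implies $ac\preccurlyeq bc$ and $ca\preccurlyeq cb$ \cite{Lawson-1998}. Granting this, if $(s,t)\in{\uparrow_{\preccurlyeq}}(i,j)$, i.e. $(i,j)\preccurlyeq(s,t)$, then $(m,n)=(i,j)\cdot(k,l)\preccurlyeq(s,t)\cdot(k,l)$, so $(s,t)\cdot(k,l)\in{\uparrow_{\preccurlyeq}}(m,n)$; this is the first inclusion, and the symmetric argument applied to $(k,l)\preccurlyeq(u,v)$ gives the second.

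To keep the argument self-contained and in the explicit style of the preceding propositions, I would instead verify both inclusions by hand. First, Proposition~\ref{proposition-2.3} gives
\[
{\uparrow_{\preccurlyeq}}(i,j)=\{(i-p,j-p)\colon p\in\omega\},
\]
because $(i,j)\preccurlyeq(s,t)$ means $s\leqslant i$ and $i-j=s-t$. I would then compute $(m,n)=(i,j)\cdot(k,l)$ from \eqref{eq-1}, splitting into the branches $j\leqslant k$, where $(m,n)=(i-j+k,l)$, and $j\geqslant k$, where $(m,n)=(i,j-k+l)$.

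For the first inclusion I would take a generic point $(i-p,j-p)$, $p\in\omega$, and evaluate $(i-p,j-p)\cdot(k,l)$, the relevant branch of \eqref{eq-1} now being decided by the sign of $(j-p)-k$. If $j\leqslant k$, every product equals $(i-j+k,l)=(m,n)$; if $j\geqslant k$, the product is the $q=p$ element $(i-p,j-k+l-p)$ of ${\uparrow_{\preccurlyeq}}(m,n)$ when $p\leqslant j-k$, and the $q=j-k$ element $(i-j+k,l)$ when $p\geqslant j-k$. The second inclusion is entirely analogous: one computes $(i,j)\cdot(k-p,l-p)$ and checks that the output again has the form $(m-q,n-q)$ with $q\in\omega$. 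The only real work is the bookkeeping --- one must track which branch of \eqref{eq-1} is active and confirm that the recovered index $q$ is genuinely nonnegative --- but no single case is hard, and I expect the compatibility argument of the first paragraph to be the cleaner one to record.
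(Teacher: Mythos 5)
Your proposal is correct, and your first paragraph takes a genuinely different route from the paper. The paper proves the lemma by brute force: it writes ${\uparrow_{\preccurlyeq}}(i,j)=\{(i-s,j-s)\colon s\in\omega\}$ via Proposition~\ref{proposition-2.3}, splits into the cases $k\geqslant j$ and $k<j$, computes $(i-s,j-s)\cdot(k,l)$ in each, and in the harder case certifies membership in ${\uparrow_{\preccurlyeq}}(m,n)$ not by exhibiting the index directly but by multiplying the product on the right by the idempotent $(j-k+l,j-k+l)$, checking the result is $(m,n)$, and invoking Lemma~1.4.6 of \cite{Lawson-1998}. Your first argument instead quotes the general fact that the natural partial order of an inverse semigroup is compatible with multiplication on either side, which reduces the whole lemma to one line: $(i,j)\preccurlyeq(s,t)$ gives $(m,n)=(i,j)\cdot(k,l)\preccurlyeq(s,t)\cdot(k,l)$. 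This is cleaner and more general (it proves the analogous statement in any inverse semigroup), at the cost of importing a citation rather than staying with the concrete coordinate formulas the rest of Section~2 relies on. Your fallback explicit computation is essentially the paper's argument in spirit, and your bookkeeping is right: in the case $j\geqslant k$ the product $(i-p,j-p)\cdot(k,l)$ is the element of ${\uparrow_{\preccurlyeq}}(i,j-k+l)$ with index $q=p$ for $p\leqslant j-k$ and index $q=j-k$ for $p\geqslant j-k$, both nonnegative; the only difference from the paper is that you identify $q$ directly where the paper multiplies by an idempotent. Either version is acceptable; the compatibility argument is the one worth recording.
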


\begin{proof}
З твердження~\ref{proposition-2.3} випливає рівність
\begin{equation}\label{eq-2.1}
  {\uparrow_{\preccurlyeq}}(p,q)=\{(p-s,q-s)\in\mathscr{C}_\mathbb{Z}\colon s\in\omega \},
\end{equation}
для довільного елемента $(p,q)\in \mathscr{C}_\mathbb{Z}$.

Зафіксуємо довільні елементи $(i,j)$ i $(k,l)$ напівгрупи $\mathscr{C}_\mathbb{Z}$.

Розглянемо можливі випадки.

Якщо $k\geqslant j$, то $(i,j)\cdot (k,l)=(i-j+k,l)$ і для довільного невід'ємного цілого числа $s$ маємо, що
\begin{align*}
  (i-s,j-s)\cdot (k,l)&=(i-s-(j-s)+k,l)= \\
   &=(i-j+k,l)=(m,n),
\end{align*}
а отже, з рівності \eqref{eq-2.1} випливає, що ${\uparrow_{\preccurlyeq}}(i,j)\cdot (k,l)\subseteq {\uparrow_{\preccurlyeq}}(m,n)$.

Якщо $k<j$, то $(i,j)\cdot (k,l)=(i,j-k+l)$ і для довільного невід'ємного цілого числа $s$ маємо, що
\begin{align*}
  (i-s,j-s)\cdot (k,l)&=
  \left\{
    \begin{array}{ll}
      (i-s-(j-s)+k,l), & \hbox{якщо~} j-s\leqslant k;\\
      (i-s,j-s-k+l),   & \hbox{якщо~} j-s>k
    \end{array}
  \right.=
  \\
   &=
  \left\{
    \begin{array}{ll}
      (i-j+k,l),     & \hbox{якщо~} j-s\leqslant k;\\
      (i-s,j-s-k+l), & \hbox{якщо~} j-s>k.
    \end{array}
  \right.
\end{align*}
Тоді
\begin{align*}
  (i-j+k,l)\cdot (j-k+l,j-k+l)&=(i-j+k-l+(j-k+l),j-k+l)= \\
    &=(i-j+k-l+j-k+l,j-k+l)= \\
    &=(i,j-k+l)
\end{align*}
i
\begin{align*}
  (i-s,j-s-k+l){\cdot}(j-k+l,j-k+l)&=(i{-}s{-}(j{-}s{-}k+l)+j{-}k+l,j{-}k+l)= \\
   &=(i-s-j+s+k-l+j-k+l,j-k+l)=\\
    &=(i,j-k+l)
\end{align*}
для довільного невід'ємного числа $s$. З  леми~1.4.6 з \cite{Lawson-1998} випливає, що $(i-s,j-s)\cdot(k,l)\in {\uparrow_{\preccurlyeq}}(m,n)$, а отже, скориставшись рівністю \eqref{eq-2.1}, отримуємо включення ${\uparrow_{\preccurlyeq}}(i,j)\cdot(k,l)\subseteq {\uparrow_{\preccurlyeq}}(m,n)$.

Доведення включення $(i,j)\cdot{\uparrow_{\preccurlyeq}}(k,l)\subseteq {\uparrow_{\preccurlyeq}}(m,n)$ аналогічне.
\end{proof}

Нагадаємо, що топологічний простір $X$ називається \emph{берівським}, якщо для довільної послідовності $A_1, A_2,\ldots, A_i,\ldots$ щільних відкритих підмножин простору $X$ перетин $\displaystyle\bigcap_{i=1}^\infty A_i$ є щільною підмножиною в $X$ \cite{Haworth-McCoy=1977}.

У праці \cite{Chornenka-Gutik=2026} доведено, що кожна ліво-неперевна (право-неперервна) гаусдорфова топологія на біциклічний напівгрупі дискретна. З прикладу~\ref{example-2.8} випливає, що для розширеної біциклічної напівгрупи аналогічне твердження не виконується.

\begin{example}\label{example-2.8}
Топологія $\tau_\mathrm{B}$ на розширеній біциклічній напівгрупі $\mathscr{C}_\mathbb{Z}$ породжується передбазою $\mathscr{P}_\mathrm{B}$, яка складається з таких сімей:
\begin{itemize}
  \item $\mathscr{P}_0=\left\{\{(i,j)\}\colon i\in\mathbb{Z}\setminus\mathbb{N}, j\in\mathbb{Z} \right\}$;
  \item $\mathscr{P}_1=\left\{\mathscr{C}_\mathbb{Z}\setminus \{(i,j)\}\colon i,j\in\mathbb{Z}\right\}$;
  \item $\mathscr{P}_2=\left\{\mathscr{C}_\mathbb{Z}\setminus {\uparrow_{\preccurlyeq}}(i,j)\colon i,j\in\mathbb{Z}\right\}$;
  \item $\mathscr{P}_3=\left\{O(i,j)=\bigcup_{k\in\omega} {\uparrow_{\preccurlyeq}}(i+k,j)\colon i,j\in\mathbb{Z}\right\}$.
\end{itemize}

Очевидно, що $\tau_\mathrm{B}$~-- недискретна топологія на $\mathscr{C}_\mathbb{Z}$. Також усі точки множини
\begin{equation*}
\mathrm{IZ}=\{(i,j)\colon i\in\mathbb{Z}\setminus\mathbb{N}, j\in\mathbb{Z}\}
\end{equation*}
є ізольованими в просторі $(\mathscr{C}_\mathbb{Z},\tau_\mathrm{B})$, а оскільки $\mathscr{P}_1\subseteq \mathscr{P}_\mathrm{B}$, то $\tau_\mathrm{B}$~-- $T_1$-топологія. З визначення передбази $\mathscr{P}_\mathrm{B}$ випливає, що для довільних $i\in\mathbb{Z}\setminus\mathbb{N}$ i  $j\in\mathbb{Z}$ сім'я $\mathscr{B}_{\mathrm{B}}(i,j)=\{U_p(i,j)\colon p\in\mathbb{N}\}$, де $U_p(i,j)=\{(i,j)\}\cup\bigcup_{s=p}^\infty{\uparrow_{\preccurlyeq}}(i+s,j)$, визначає базу топології $\tau_\mathrm{B}$ у точці $(i,j)$. За твердженням~1.3.5 з \cite{Engelking=1989}, $\mathrm{IZ}$~-- відкрита, щільна підмножина в просторі $(\mathscr{C}_\mathbb{Z},\tau_\mathrm{B})$, а отже, за  твердженням~1.18 і теоремою 1.15 з монографії \cite{Haworth-McCoy=1977} топологічний простір є берівським.
\end{example}

\begin{proposition}\label{proposition-2.9}
$(\mathscr{C}_\mathbb{Z},\tau_\mathrm{B})$~-- ліва топологічна напівгрупа.
\end{proposition}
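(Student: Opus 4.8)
The plan is to establish that $(\mathscr{C}_\mathbb{Z},\tau_\mathrm{B})$ is a left topological semigroup by verifying that each right translation $\rho_{(k,l)}\colon(x,y)\mapsto(x,y)\cdot(k,l)$ is continuous. First I would record that, by the description of $\tau_\mathrm{B}$ in Example~\ref{example-2.8}, every point of $\mathrm{IZ}$ (that is, every $(i,j)$ with $i\in\mathbb{Z}\setminus\mathbb{N}$) is isolated, so continuity of $\rho_{(k,l)}$ at such points is automatic. This leaves the non-isolated points, which form the set $\mathbb{N}\times\mathbb{Z}$ and at which a neighbourhood base is given by the sets $U_p(i,j)=\{(i,j)\}\cup\bigcup_{s=p}^\infty{\uparrow_{\preccurlyeq}}(i+s,j)$, $p\in\mathbb{N}$.

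Fix such an $(i,j)$ together with $(k,l)\in\mathscr{C}_\mathbb{Z}$ and set $(m,n)=(i,j)\cdot(k,l)$. The hard part, and the single place where the construction is really used, is to check that right multiplication by $(k,l)$ cannot push a non-isolated point into the isolated region: reading off \eqref{eq-1} one sees that $m=i-j+k\geqslant i$ when $j\leqslant k$ and $m=i$ when $j>k$, so in both cases $m\in\mathbb{N}$ and $U_q(m,n)$ is a legitimate basic neighbourhood of the image. By contrast a left translation may send a non-isolated point into $(\mathbb{Z}\setminus\mathbb{N})\times\mathbb{Z}$, i.e.\ onto an isolated point, so $\rho$ is the \emph{only} side along which continuity can be expected; this asymmetry is precisely what is responsible for the statement, and it is the step I expect to be the main obstacle.

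It then remains to match neighbourhoods. Since the branch of \eqref{eq-1} used to evaluate $(i+s,j)\cdot(k,l)$ is governed only by the comparison of $j$ with $k$, it is independent of $s$, and a short computation gives $(i+s,j)\cdot(k,l)=(m+s,n)$ for all $s\in\omega$. Feeding this into Lemma~\ref{lemma-2.7} yields ${\uparrow_{\preccurlyeq}}(i+s,j)\cdot(k,l)\subseteq{\uparrow_{\preccurlyeq}}(m+s,n)$, and hence
\begin{equation*}
  U_p(i,j)\cdot(k,l)\subseteq\{(m,n)\}\cup\bigcup_{s=p}^\infty{\uparrow_{\preccurlyeq}}(m+s,n)=U_p(m,n)
\end{equation*}
for every $p\in\mathbb{N}$. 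Thus, given an arbitrary basic neighbourhood $U_q(m,n)$ of $(m,n)$, the neighbourhood $U_q(i,j)$ is carried into it by $\rho_{(k,l)}$, so $\rho_{(k,l)}$ is continuous at $(i,j)$. As $(i,j)$ and $(k,l)$ were arbitrary, every right translation is continuous, whence $(\mathscr{C}_\mathbb{Z},\tau_\mathrm{B})$ is a left topological semigroup. Once the membership $m\in\mathbb{N}$ is secured, Lemma~\ref{lemma-2.7} together with the $s$-independence of the multiplication branch reduce the remainder to routine bookkeeping.
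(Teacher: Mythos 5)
Your proof is correct and follows essentially the same route as the paper's: reduce to the non-isolated points (those with $i\in\mathbb{N}$), compute $(i+s,j)\cdot(k,l)=(m+s,n)$ for all $s\in\omega$, invoke Lemma~\ref{lemma-2.7} to get ${\uparrow_{\preccurlyeq}}(i+s,j)\cdot(k,l)\subseteq{\uparrow_{\preccurlyeq}}(m+s,n)$, and conclude $U_p(i,j)\cdot(k,l)\subseteq U_p(m,n)$, i.e.\ continuity of every right shift $\rho_{(k,l)}$. Your explicit check that $m\in\mathbb{N}$ (so that $U_p(m,n)$ really is a basic neighbourhood of the image) is a detail the paper leaves implicit, but the argument is the same.
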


\begin{proof}
Зафіксуємо довільні елементи $(i,j)$ i $(k,l)$ напівгрупи $\mathscr{C}_\mathbb{Z}$. З визначення топології $\tau_\mathrm{B}$ випливає, що достатньо розглянути лише випадок $(i,j)\cdot(k,l)$, де $i\in\mathbb{N}$.

Зафіксуємо довільний відкритий базовий окіл $U_p(m,n)$ точки
\begin{equation*}
  (m,n)=(i,j)\cdot(k,l)=
  \left\{
    \begin{array}{ll}
      (i-j+k,l), & \hbox{якщо~} j\leqslant k;\\
      (i,j-k+l), & \hbox{якщо~} j>k
    \end{array}
  \right.
\end{equation*}
у топологічному просторі $(\mathscr{C}_\mathbb{Z},\tau_\mathrm{B})$. Оскільки для довільного натурального числа $s$ виконується рівність
\begin{equation*}
\begin{split}
  (i+s,j)\cdot(k,l)&=
  \left\{
    \begin{array}{ll}
      (i+s-j+k,l), & \hbox{якщо~} j\leqslant k;\\
      (i+s,j-k+l), & \hbox{якщо~} j>k
    \end{array}
  \right.=\\
    &=(m+s,n),
\end{split}
\end{equation*}
то з леми~\ref{lemma-2.7} випливає, що ${\uparrow_{\preccurlyeq}}(i+s,j)\cdot(k,l)\subseteq {\uparrow_{\preccurlyeq}}(m+s,n)$. Звідси отримуємо, що для довільного натурального числа $p$ виконується включення $U_p(i,j)\cdot(k,l)\subseteq U_p(m,n)$, а отже, праві зсуви в $(\mathscr{C}_\mathbb{Z},\tau_\mathrm{B})$ неперервні.
\end{proof}

Нехай $X$~-- топологічнй простір й $Y$~-- підпростір у $X$. Будемо говорити, що простір $Y$ є \emph{квазі-регулярним} у точці $x\in Y$, якщо для довільного відкритого окулу $U(x)$ точки $x$ у просторі $Y$ існує відкрита непорожна підмножина $V$ у $Y$ така, що $\operatorname{cl}_Y(V)\subseteq U(x)$. Топологічний простір $X$ називається \emph{квазі-регуляним}, якщо $X$ -- квазі-регулярний у кожній своїй точці.

\begin{remark}\label{remark-2.10}
\begin{enumerate}
  \item За твердженням~1.30 з \cite{Haworth-McCoy=1977} кожний зліченний берівський $T_1$-прос\-тір міс\-тить нескіченну підмножину ізольованих у ньому точок, а отже, є квазі-ре\-гу\-ляр\-ним. Із вище сказаного випливає, що топологічний простір $(\mathscr{C}_\mathbb{Z},\tau_\mathrm{B})$ є квазі-регулярним.

  \item Кожна неперервна зліва інверсна $T_1$-топологія на розширеній біциклічній напівгрупі $\mathscr{C}_\mathbb{Z}$ є неперервною справа, а отже, за теоремою~1 з \cite{Gutik-Maksymyk=2016} є дискретною.

  \item Оскільки інверсія $\operatorname{inv}\colon \mathscr{C}_\mathbb{Z}\to\mathscr{C}_\mathbb{Z}$, $(i,j)\mapsto (j,i)$ є анти-ізоморфізмом розширеної біциклічної напівгрупи, то дуальна топологія $\tau_\mathrm{B}^{\mathrm{d}}=\{U\subseteq \mathscr{C}_\mathbb{Z}\colon \operatorname{inv}(U)\in \tau_\mathrm{B}\}$ до топології $\tau_\mathrm{B}$ є неперервною справа берівською $T_1$-топологією на $\mathscr{C}_\mathbb{Z}$.
\end{enumerate}
\end{remark}

\section{Коли топологія на $\mathscr{C}_\mathbb{Z}$ дискретна?}

\begin{lemma}\label{lemma-3.1}
\begin{enumerate}
  \item[$(i)$] Для довільних елементів $(i,j)$, $(k,l)$, $(m,n)$ i $(p,q)$ напівгрупи $\mathscr{C}_\mathbb{Z}$ з умови $(p,q)=(i,j)\cdot(k,l)\cdot(m,n)$ випливає рівність $p-q=i-j+k-l+m-n$.

  \item[$(ii)$] Елементіи $(i,j)$ і $(k,l)$ напівгрупи $\mathscr{C}_\mathbb{Z}$ порівняльні стосовно природного часткового порядку на $\mathscr{C}_\mathbb{Z}$ тоді і лише тоді, коли $i-j=k-l$.

  \item[$(iii)$] Якщо $(i_0,i_1)$, $(j_1,j_0)$ та $(i_0,j_0)$~-- фіксовані елементи напівгрупи $\mathscr{C}_\mathbb{Z}$, то $(x,y)\in\mathscr{C}_\mathbb{Z}$ є роз\-в'яз\-ком рівняння
      \begin{equation}\label{eq-3.1}
        (i_0,i_1)\cdot(x,y)\cdot(j_1,j_0)=(i_0,j_0)
      \end{equation}
      тоді і лише тоді, коли $(x,y)\in{\uparrow_{\preccurlyeq}}(i_1,j_1)$.

  \item[$(iv)$] Якщо $(j_1,j_0)$ та $(i_0,j_0)$~-- фіксовані елементи напівгрупи $\mathscr{C}_\mathbb{Z}$, то $(x,y)\in\mathscr{C}_\mathbb{Z}$ є роз\-в'яз\-ком рівняння $(x,y)\cdot(j_1,j_0)=(i_0,j_0)$ тоді і лише тоді, коли $(x,y)\in{\uparrow_{\preccurlyeq}}(i_0,j_1)$.

  \item[$(v)$] Якщо $(i_0,i_1)$ та $(i_0,j_0)$~-- фіксовані елементи напівгрупи $\mathscr{C}_\mathbb{Z}$, то $(x,y)\in\mathscr{C}_\mathbb{Z}$ є роз\-в'яз\-ком рівняння $(i_0,i_1)\cdot(x,y)=(i_0,j_0)$ тоді і лише тоді, коли $(x,y)\in{\uparrow_{\preccurlyeq}}(i_1,j_0)$.
\end{enumerate}
\end{lemma}

\begin{proof}
$(i)$   За твердженням~2.1$(iv)$ з \cite{Fihel-Gutik-2011} з умови $(a,b)=(k,l)\cdot(m,n)$ випливає рівність $a-b=k-l+m-n$. Оскільки $(p,q)=(i,j)\cdot(a,b)$, то за твердженням~2.1$(iv)$ з \cite{Fihel-Gutik-2011} маємо, що
\begin{equation*}
p-q=i-j+a-b=i-j+k-l+m-n.
\end{equation*}

$(ii)$ випливає з твердження~\ref{proposition-2.3}.

$(iii)$ $(\Leftarrow)$ Припустимо, що $(x,y)\in{\uparrow_{\preccurlyeq}}(i_1,j_1)$. За твердженням~\ref{proposition-2.3} існує невід'ємне ціле число $a$ таке, що $(x,y)=(i_1-a,j_1-a)$. Тоді
\begin{align*}
  (i_0,i_1)\cdot(i_1-a,j_1-a)\cdot(j_1,j_0)&=(i_0,i_1-(i_1-a)+(j_1-a))\cdot(j_1,j_0)= \\
   &=(i_0,i_1-i_1+a+j_1-a)\cdot(j_1,j_0)= \\
   &=(i_0,j_1)\cdot(j_1,j_0)= \\
   &=(i_0,j_0),
\end{align*}
а отже, $(x,y)$ є розв'язком рівняння \eqref{eq-3.1}.

$(\Rightarrow)$ Припустимо, що $(x,y)$ є розв'язком рівняння \eqref{eq-3.1}. З твердження~$(i)$ випливає, що
\begin{equation*}
  i_0-j_0=i_0-i_1+x-y+j_1-j_0,
\end{equation*}
а отже, $x-y=i_1-j_1$. Тоді за твердженням~$(ii)$ елементи $(x,y)$ і $(i_1,j_1)$ порівняльні стосовно природного часткового порядку $\preccurlyeq$ на напівгрупі $\mathscr{C}_\mathbb{Z}$. Отже, виконується хоча б одна з умов: $(x,y)\preccurlyeq(i_1,j_1)$ або $(i_1,j_1)\preccurlyeq(x,y)$. Якщо $(x,y)\preccurlyeq(i_1,j_1)$, то за твердженням~\ref{proposition-2.3} існує невід'ємне ціле число $a$ таке, що $(x,y)=(i_1+a,j_1+a)$. Тоді
\begin{align*}
  (i_0,i_1)\cdot(i_1+a,j_1+a)\cdot(j_1,j_0)&=(i_0-i_1+(i_1+a),j_1+a)\cdot(j_1,j_0)= \\
   &=(i_0-i_1+i_1+a,j_1+a)\cdot(j_1,j_0)= \\
   &=(i_0+a,j_1+a)\cdot(j_1,j_0)= \\
   &=(i_0+a,j_0+a),
\end{align*}
а отже, $a=0$ i  $(x,y)=(i_1,j_1)\in{\uparrow_{\preccurlyeq}}(i_1,j_1)$. Якщо $(i_1,j_1)\preccurlyeq(x,y)$, то $(x,y)\in{\uparrow_{\preccurlyeq}}(i_1,j_1)$.

Доведення тверджень $(iv)$ і $(v)$ аналогічні доведенню твердження $(iii)$.
\end{proof}

Множину ізольованих точок лівої (правої) топологічної напівгрупи $\mathscr{C}_\mathbb{Z}$ з ізольованою точкою описує твердження \ref{proposition-3.1}.

\begin{proposition}\label{proposition-3.1}
Нехай $\tau$~-- неперервна зліва (справа) $T_1$-топологія на розширеній біциклічній напівгрупі $\mathscr{C}_\mathbb{Z}$. Якщо $(i_0,j_0)$~-- ізольована точка в просторі $(\mathscr{C}_\mathbb{Z},\tau)$, то кожна точка множини $\mathrm{IZ}_{i_0}=\{(i,j)\in\mathscr{C}_\mathbb{Z}\colon i\leqslant i_0 \}$ $(\mathrm{IZ}^{j_0}=\{(i,j)\in\mathscr{C}_\mathbb{Z}\colon j\leqslant j_0 \})$ є ізольованою в просторі $(\mathscr{C}_\mathbb{Z},\tau)$.
\end{proposition}

\begin{proof}
Оскільки праві зсуви в $(\mathscr{C}_\mathbb{Z},\tau)$ неперервні та $(i_0,j_0)$~-- ізольована точка в $(\mathscr{C}_\mathbb{Z},\tau)$, то повний прообраз $\rho_{(j_0,j_0)}^{-1}(i_0,j_0)$ правого зсуву $\rho_{(j_0,j_0)}\colon \mathscr{C}_\mathbb{Z}\to \mathscr{C}_\mathbb{Z}$, $(k,l)\mapsto (k,l)\cdot (j_0,j_0)$ одноточкової множини $\{(i_0,j_0)\}$ є відкрито-замкненою підмножиною в просторі $(\mathscr{C}_\mathbb{Z},\tau)$, оскільки $(\mathscr{C}_\mathbb{Z},\tau)$~-- $T_1$-простір.

З твердження~\ref{proposition-2.3} випливає, що
\begin{equation*}
\rho_{(j_0,j_0)}^{-1}(i_0,j_0)={\uparrow_{\preccurlyeq}}(i_0,j_0)=\{(i_0-k,j_0-k)\colon k\in\omega\}.
\end{equation*}
З того, що $(\mathscr{C}_\mathbb{Z},\tau)$~-- $T_1$-простір, то з вищесказаного випливає, що кожна точка вигляду $(i_0-k,j_0-k)$ є ізольованою в $(\mathscr{C}_\mathbb{Z},\tau)$, оскільки
\begin{equation*}
  \{(i_0-k,j_0-k)\}={\uparrow_{\preccurlyeq}}(i_0,j_0)\setminus\left({\uparrow_{\preccurlyeq}}(i_0{-}k{-}1,j_0{-}k{-}1)\cup \{(i_0,j_0),\ldots,(i_0{-}k{+}1,j_0{-}k{+}1)\}\right).
\end{equation*}

Зафіксуємо довільний елемент $(i,j)$ напівгрупи $\mathscr{C}_\mathbb{Z}$ такий, що $i\leqslant i_0$, тобто $i=i_0-k$ для деякого числа $k\in\omega$. Тоді
\begin{equation*}
  (i_0-k,j_0-k)=(i_0-k,j_0)\cdot (j_0,j_0-k)=\rho_{(j_0,j_0-k)}(i_0-k,j_0).
\end{equation*}
З припущення твердження та вище наведених міркувань випливає, що повний прообраз
\begin{equation*}
  \rho_{(j_0,j_0-k)}^{-1}(i_0-k,j_0-k)={\uparrow_{\preccurlyeq}}(i_0-k,j_0)=\left\{(i_0-k-p,j_0-p)\colon p\in\omega\right\}
\end{equation*}
є відкрито-замненою підмножиною в топологічному просторі $(\mathscr{C}_\mathbb{Z},\tau)$, яка складається з ізольованих у $(\mathscr{C}_\mathbb{Z},\tau)$ точок. Звідси випливає твердження.

Доведення дуального твердження аналогічне.
\end{proof}

Теорема~\ref{theorem-3.2} дає необхідні та достані умови, за виконання яких неперервна зліва (справа) $T_1$-топологія на розширеній біциклічній напівгрупі $\mathscr{C}_\mathbb{Z}$ дискретна.

\begin{theorem}\label{theorem-3.2}
Неперервна зліва (справа) $T_1$-топологія $\tau$ на розширеній біциклічній напівгрупі $\mathscr{C}_\mathbb{Z}$ дискретна тоді і лише тоді, коли існує послідовність $\{(i_n,j_n)\}_{n\in\mathbb{N}}$ ізольованих точок простору $(\mathscr{C}_\mathbb{Z},\tau)$ така, що послідовність $\{i_n\}_{n\in\mathbb{N}}$ $(\{j_n\}_{n\in\mathbb{N}})$ строго монотонно зростаюча.
\end{theorem}

\begin{proof}
Імплікація $(\Rightarrow)$ очевидна.

$(\Leftarrow)$ Припустимо, що існує послідовність $\{(i_n,j_n)\}_{n\in\mathbb{N}}$ ізольованих точок простору $(\mathscr{C}_\mathbb{Z},\tau)$ така, що послідовність $\{i_n\}_{n\in\mathbb{N}}$ $(\{j_n\}_{n\in\mathbb{N}})$ строго монотонно зростаюча. Зафіксуємо довільний елемент $(i,j)$ напівгрупи $\mathscr{C}_\mathbb{Z}$. Тоді існує елмент $(i_k,j_k)$, $k\in\mathbb{N}$, послідовності $\{(i_n,j_n)\}_{n\in\mathbb{N}}$ такий, що $i<i_k$ ($j<j_k$). За твердженням~\ref{proposition-3.1}, $(i,j)$~-- ізольована точка в топологічному просторі $(\mathscr{C}_\mathbb{Z},\tau)$. З вибору точки $(i,j)$ випливає, що топологія $\tau$ дискретна.
\end{proof}

\begin{remark}
З прикладу~\ref{example-2.8} випливає, що умова теореми~\ref{theorem-3.2} про існування послідовності $\{(i_n,j_n)\}_{n\in\mathbb{N}}$ ізольованих точок простору $(\mathscr{C}_\mathbb{Z},\tau)$ суттєва навіть у випадку, коли простір напівгрупи $\mathscr{C}_\mathbb{Z}$ берівський.
\end{remark}

У випадку $T_1$-напівтопологічних напівгруп твердження теореми~\ref{theorem-3.2} послаблюється до такої теореми.

\begin{theorem}\label{theorem-3.3}
Трансляційно-неперервна $T_1$-топологія $\tau$ на розширеній біциклічній напівгрупі $\mathscr{C}_\mathbb{Z}$ дискретна тоді і лише тоді, коли простір  $(\mathscr{C}_\mathbb{Z},\tau)$ містить ізольвану точку.
\end{theorem}

\begin{proof}
Імплікація $(\Rightarrow)$ очевидна.

$(\Leftarrow)$ Припустимо, що простір $(\mathscr{C}_\mathbb{Z},\tau)$ містить ізольвану точку $(i_0,j_0)$. Зафіксуємо довільну точ\-ку $(i,j)\in \mathscr{C}_\mathbb{Z}$. Тоді $(i_0,i)\cdot(i,j)\cdot(j,j_0)=(i_0,j_0)$. За лемою~\ref{lemma-3.1}$(iii)$ множина розв'язків рівняння $(i_0,i)\cdot(x,y)\cdot(j,j_0)=(i_0,j_0)$ збігається з множиною ${\uparrow_{\preccurlyeq}}(i,j)$. Оскільки зсуви в $(\mathscr{C}_\mathbb{Z},\tau)$ непервні, то множина ${\uparrow_{\preccurlyeq}}(i,j)$ є відкритою в просторі $(\mathscr{C}_\mathbb{Z},\tau)$, як повний прообраз відкритої множини $\{(i_0,j_0)\}$ стосовно неперервного відображення, яке є композицією правого $\rho_{(j,j_0)}$ та лівого $\lambda_{(i_0,i)}$ зсувів. За твердженням~\ref{proposition-2.3}, $(i,j)\preccurlyeq(i-1,j-1)$ i ${\uparrow_{\preccurlyeq}}(i,j)\setminus{\uparrow_{\preccurlyeq}}(i-1,j-1)=\{(i,j)\}$. Оскільки $\tau$~--- $T_1$-топологія на $\mathscr{C}_\mathbb{Z}$, то за лемою~\ref{lemma-2.4} множина ${\uparrow_{\preccurlyeq}}(i-1,j-1)$ замкнена в просторі $(\mathscr{C}_\mathbb{Z},\tau)$. Отже, $(i,j)$~-- ізольована точка в просторі $(\mathscr{C}_\mathbb{Z},\tau)$. Із вищевикладеного випливає, що $(\mathscr{C}_\mathbb{Z},\tau)$~-- дискретний простір.
\end{proof}

За твердженням~1.30 з \cite{Haworth-McCoy=1977} кожний зліченний берівський $T_1$-простір міс\-тить нескіченну підмножину ізольових у ньому точок, а отже, з теореми~\ref{theorem-3.3} випливає такий наслідок.

\begin{corollary}\label{corollary-3.4}
Трансляційно-неперервна берівська $T_1$-топологія $\tau$ на розширеній біциклічній напівгрупі $\mathscr{C}_\mathbb{Z}$ дискретна.
\end{corollary}

\begin{lemma}\label{lemma-3.4}
Нехай $\tau$~-- трансляційно-неперервна $T_1$-топологія на розширеній біциклічній напівгрупі $\mathscr{C}_\mathbb{Z}$ така, що відображення $\varphi\colon \mathscr{C}_\mathbb{Z}\to E(\mathscr{C}_\mathbb{Z})$, $x\mapsto xx^{-1}$ i $\psi\colon \mathscr{C}_\mathbb{Z}\to E(\mathscr{C}_\mathbb{Z})$, $x\mapsto x^{-1}x$ неперервні. Якщо для деякого ідемпотента $(i,i)\in\mathscr{C}_\mathbb{Z}$ існує відкритий окіл $U(i,i)$ точки $(i,i)$ у просторі $(\mathscr{C}_\mathbb{Z},\tau)$ такий, що множина $U(i,i)\cap E(\mathscr{C}_\mathbb{Z})$ скінченна, то $\tau$~-- дискретна топологія.
\end{lemma}

\begin{proof}
Позаяк  $\tau$~-- $T_1$-топологія на $\mathscr{C}_\mathbb{Z}$, то не зменшуючи загальності можемо вважати, що існує відкритий окіл точки $(i,i)$ у топологічному просторі $(\mathscr{C}_\mathbb{Z},\tau)$ такий, що $U(i,i)\cap E(\mathscr{C}_\mathbb{Z})=\{(i,i)\}$. З того, що $\tau$~-- трансляційно-неперервна топологія випливає, що для довільного ідемпотента $(j,j)$ напівгрупи $\mathscr{C}_\mathbb{Z}$ існує його відкритий окіл $V(j,j)$ у просторі $(\mathscr{C}_\mathbb{Z},\tau)$ такий, що
\begin{equation*}
  (i,j)\cdot V(j,j)\cdot (j,i)\subseteq U(i,i).
\end{equation*}
З леми~\ref{lemma-3.1}$(iii)$ випливає, що $V(j,j)\cap E(\mathscr{C}_\mathbb{Z})\subseteq {\uparrow_{\preccurlyeq}}(j,j)$, оскільки окіл $U(i,i)$ містить єдиний ідемпотент $(i,i)$. Позаяк $\tau$~-- трансляційно-неперервна $T_1$-топологія, то за лемою~\ref{lemma-2.4} множина ${\uparrow_{\preccurlyeq}}(j-1,j-1)$ замкнена в топологічному просторі $(\mathscr{C}_\mathbb{Z},\tau)$. Отже, відкритий окіл $W(j,j)=V(j,j)\setminus{\uparrow_{\preccurlyeq}}(j-1,j-1)$ містить єдиний ідемпотент напівгрупи $\mathscr{C}_\mathbb{Z}$. Із вищедоведеного випливає, що кожний ідемпотент $(k,k)$ напівгрупи $\mathscr{C}_\mathbb{Z}$ має окіл $W(k,k)$, який містить єдиний ідемпотент .

Нехай $(m,n)$~-- довільний елемент напівгрупи $\mathscr{C}_\mathbb{Z}$. Позаяк відображення $\varphi\colon \mathscr{C}_\mathbb{Z}\to E(\mathscr{C}_\mathbb{Z})$, $x\mapsto xx^{-1}$ i $\psi\colon \mathscr{C}_\mathbb{Z}\to E(\mathscr{C}_\mathbb{Z})$, $x\mapsto x^{-1}x$ неперервні, то з вищевикладеного випливає, що існує відкритий окіл $O(m,n)$ точки $(m,n)$ у топологічному просторі $(\mathscr{C}_\mathbb{Z},\tau)$ такий, що
\begin{equation*}
  (m_1,n_1)\cdot(m_1,n_1)^{-1}=(m_1,m_1)\in \{(m,m)\}
\end{equation*}
і
\begin{equation*}
  (m_1,n_1)^{-1}\cdot(m_1,n_1)=(n_1,n_1)\in \{(n,n)\}
\end{equation*}
для всіх $(m_1,n_1)\in O(m,n)$. Звідси випливає, що множина $O(m,n)$ містить лише елемент $(m,n)$, а отже, виконується твердження леми.
\end{proof}

\begin{lemma}\label{lemma-3.5}
Нехай $\tau$~-- трансляційно-неперервна $T_1$-топологія на розширеній біциклічній напівгрупі $\mathscr{C}_\mathbb{Z}$. Якщо існує елемент $(i,j)\in \mathscr{C}_\mathbb{Z}$ такий, що простір ${\updownarrow_{\preccurlyeq}}(i,j)$~-- квазі-ре\-гу\-ляр\-ний у точці $(i,j)$, то для кожного елемента $(m,n)$ напівгрупи $\mathscr{C}_\mathbb{Z}$ простір ${\updownarrow_{\preccurlyeq}}(m,n)$~-- квазі-регулярний у точці $(m,n)$.
\end{lemma}

\begin{proof}
Позаяк $(\mathscr{C}_\mathbb{Z},\tau)$~-- $T_1$-простір, то  для довільного елемента $(k,l)\in\mathscr{C}_\mathbb{Z}$ множина ${\downarrow_{\preccurlyeq}}(k,l)$~-- відкрита в підпросторі ${\updownarrow_{\preccurlyeq}}(k,l)$, бо за лемою~\ref{lemma-2.4} множина ${\uparrow_{\preccurlyeq}}(k-1,l-1)$ замкнена в $(\mathscr{C}_\mathbb{Z},\tau)$. Відобра\-жен\-ня $\mathfrak{f}_{(i,j)}^{(m,n)}\colon \mathscr{C}_\mathbb{Z}\to\mathscr{C}_\mathbb{Z}$, $(a,b)\mapsto(i,m)\cdot(a,b)\cdot(n,j)$ неперервне як композиція лівого та правого зсувів. За твердженням~\ref{proposition-2.3}, $(m+k,n+k)\in{\uparrow_{\preccurlyeq}}(m,n)$ для довільного $k\in\omega$, і з визначення напівгрупової операції на $\mathscr{C}_\mathbb{Z}$ випливає, що
\begin{align*}
  \mathfrak{f}_{(i,j)}^{(m,n)}(m+k,n+k)&=(i,m)\cdot(m+k,n+k)\cdot(n,j)= \\
   &=(i-m+m+k,n+k)\cdot(n,j)= \\
   &=(i+k,n+k-n+j)= \\
   &=(i+k,j+k)
\end{align*}
для довільного $k\in\omega$. Отже, звуження $\mathfrak{f}_{(i,j)}^{(m,n)}|_{{\downarrow_{\preccurlyeq}}(m,n)}\colon {\downarrow_{\preccurlyeq}}(m,n)\to{\downarrow_{\preccurlyeq}}(i,j)$ i $\mathfrak{f}^{(i,j)}_{(m,n)}|_{{\downarrow_{\preccurlyeq}}(i,j)}\colon $ ${\downarrow_{\preccurlyeq}}(i,j)\to{\downarrow_{\preccurlyeq}}(m,n)$ відображень $\mathfrak{f}_{(i,j)}^{(m,n)}$ i $\mathfrak{f}^{(i,j)}_{(m,n)}$, відповідно, є взаємно оберненими від\-об\-раженнями, а з неперервності зсувів у $(\mathscr{C}_\mathbb{Z},\tau)$ випливає, що $\mathfrak{f}_{(i,j)}^{(m,n)}|_{{\downarrow_{\preccurlyeq}}(m,n)}$ i $\mathfrak{f}^{(i,j)}_{(m,n)}|_{{\downarrow_{\preccurlyeq}}(i,j)}$ є гомеоморфізмами просторів ${\downarrow_{\preccurlyeq}}(i,j)$ i ${\downarrow_{\preccurlyeq}}(m,n)$. Позаяк для довільного елемента $(p,q)$ напівгрупи $\mathscr{C}_\mathbb{Z}$ множина ${\downarrow_{\preccurlyeq}}(p,q)$ відкрита в просторі ${\updownarrow_{\preccurlyeq}}(p,q)$, то отримуємо твердження леми.
\end{proof}

\begin{lemma}\label{lemma-3.6}
Нехай $\tau$~-- інверсна напівгрупова $T_1$-топологія на розширеній біциклічній напівгрупі $\mathscr{C}_\mathbb{Z}$. Якщо існує ідемпотент $(i,i)\in\mathscr{C}_\mathbb{Z}$ такий, що топологічний простір $E(\mathscr{C}_\mathbb{Z})$ квазі-регулярний у точці $(i,i)$, то $\tau$~-- дискретна топологія.
\end{lemma}

\begin{proof}
Нехай $U(i,i)$~-- відкритий окіл точки $(i,i)$ в просторі $E(\mathscr{C}_\mathbb{Z})$. Якщо $\tau$~-- недискретна топологія на $\mathscr{C}_\mathbb{Z}$, то з  леми~\ref{lemma-3.4} випливає, що $U(i,i)$~-- нескінченна множина. Позаяк $\tau$~-- $T_1$-топологія, то $U_{(i,i)}=U(i,i)\setminus\{(i,i)\}$~-- відкрита непорожня підмножина в просторі $E(\mathscr{C}_\mathbb{Z})$. Тоді існує непорожня відкрита підмножина $W_{(i,i)}\subseteq U_{(i,i)}$ така, що $\operatorname{cl}_{E(\mathscr{C}_\mathbb{Z})}(W_{(i,i)})\subseteq U_{(i,i)}$. Очевидно, що $O(i,j)=U(i,j)\setminus \operatorname{cl}_{E(\mathscr{C}_\mathbb{Z})}(W_{(i,i)})$~-- відкритий окіл точ\-ки $(i,i)$ в просторі $E(\mathscr{C}_\mathbb{Z})$. Не зменшуючи загальності можемо вважати, що множина $W_{(i,i)}$ нескіченна. Справді, у протилежному випадку існує ідемпотент $(j,j)\in\mathscr{C}_\mathbb{Z}$, який має відкритий окіл $U(j,j)$, що задовольняє умову $U(j,j)\cap E(\mathscr{C}_\mathbb{Z})=\{(j,j)\}$, а тоді, за лемою~\ref{lemma-3.4}, $\tau$~-- дискретна топологія.

За лемою~\ref{lemma-2.4} множина ${\uparrow_{\preccurlyeq}}(i,i)$ замкнена в топологічному просторі $(\mathscr{C}_\mathbb{Z},\tau)$, а отже, і в просторі $E(\mathscr{C}_\mathbb{Z})$. Звідси випливає, що існує ідемпотент $(j,j)\in W_{(i,i)}$ такий, що $(j,j)\in {\downarrow_{\preccurlyeq}^\circ}(i,i)$. Тоді $(i,i)\cdot(j,j)=(j,j)$ і з неперервності напівгрупової операції в $(\mathscr{C}_\mathbb{Z},\tau)$ випливає, що існують відкриті околи $W_1(i,i)$ й $W_1(j,j)$ точок $(i,i)$ та $(j,j)$, відповідно, в просторі $(\mathscr{C}_\mathbb{Z},\tau)$ такі, що
\begin{equation}\label{eq-3.2}
  (W_1(i,i)\cap E(\mathscr{C}_\mathbb{Z}))\cdot (W_1(j,j)\cap E(\mathscr{C}_\mathbb{Z}))\subseteq W_{(i,i)},
\end{equation}
\begin{align*}
  W_1(i,i)\cap E(\mathscr{C}_\mathbb{Z})&\subseteq O(i,i), \\
  W_1(j,j)\cap E(\mathscr{C}_\mathbb{Z})&\subseteq W_{)i,i)},
\end{align*}
і множини $W_1(i,i)\cap E(\mathscr{C}_\mathbb{Z})$ та $W_1(j,j)\cap E(\mathscr{C}_\mathbb{Z})$ нескінченні. Отже, для довільного ідемпотента $(k,k)\in W_1(j,j)$ існує ідемпотент $(l,l)\in W_1(i,i)$ такий, що $(k,k)\cdot(l,l)=(l,l)\cdot(k,k)=(l,l)$, а це суперечить умові \eqref{eq-3.2}. Звідси випливає, що хоча б одна з множин $W_1(i,i)\cap E(\mathscr{C}_\mathbb{Z})$ або $W_1(j,j)\cap E(\mathscr{C}_\mathbb{Z})$ скінченна. Далі скористаємося лемою~\ref{lemma-3.4}.
\end{proof}

З лем~\ref{lemma-3.5} i~\ref{lemma-3.6} випливає теорема~\ref{theorem-3.7}.

\begin{theorem}\label{theorem-3.7}
Нехай $\tau$~-- інверсна напівгрупова $T_1$-топологія на розширеній біциклічній напівгрупі $\mathscr{C}_\mathbb{Z}$. Якщо існує елемент $(i,j)\in\mathscr{C}_\mathbb{Z}$ такий, що топологічний простір ${\updownarrow_{\preccurlyeq}}(i,j)$ квазі-регулярний у точці $(i,j)$, то $\tau$~-- дискретна топологія.
\end{theorem}



\begin{thebibliography}{10}

\bibitem{Vagner-1952}
В. В.~Вагнер,
\emph{Обощенные группы},
ДАН СССР \textbf{84} (1952), 1119--1122.

\bibitem{Gutik-Mykhalenych-2020}
О.~Гутік, М. Михаленич,
\emph{Про одне узагальнення бiциклiчного моноїда},
Вісник Львів. ун-ту. Сер. мех.-мат. \textbf{90} (2020), 5--19.
DOI: 10.30970/vmm.2020.90.005-019


\bibitem{Andersen-1952}
O. Andersen,
\emph{Ein Bericht uber die Struk\-tur abstrakter Halbgruppen},
PhD Thesis. Ham\-burg, 1952.

\bibitem{Bertman-West=1976}
M.~O.~Bertman and T.~T.~West,
{\it Conditionally compact bicyclic semitopological semigroups},
Proc. Roy. Irish Acad. {\bf A76} (1976), no. 21--23, 219--226.


\bibitem{Bruck-1958}
R. H. Bruck,
\emph{A survey of binary systems},
(Erg. Math. Grenzgebiete. Neue Folge. Heft 20) Springer, Berlin-G\"{o}ttingen-Heidelberg,  1958.

\bibitem{Carruth-Hildebrant-Koch=1983}
J.~H.~Carruth, J.~A.~Hildebrant, and  R.~J.~Koch,
\emph{The theory of topological semigroups}, Vol.~I, Marcel
Dekker, Inc., New York and Basel, 1983.

\bibitem{Chornenka-Gutik=2023}
A. Chornenka and O. Gutik,
\emph{On topologization of the bicyclic monoid},
Вісник Львів. ун-ту, Сер. мех.-мат. \textbf{95} (2023), 46--56. 
DOI: 10.30970/vmm.2023.95.046-056

\bibitem{Chornenka-Gutik=2026}
A. Chornenka and O. Gutik,
\emph{On topologization of subsemigroups of the bicyclic monoid},
Preprint (arXiv:2601.02100).

\bibitem{Clifford-Preston-1961}
A.~H.~Clifford and G.~B.~Preston,
\emph{The algebraic theory of semigroups}, Vol. I,
Amer. Math. Soc. Surveys {\bf 7}, Pro\-vi\-den\-ce, R.I.,  1961.


\bibitem{Clifford-Preston-1967}
A.~H.~Clifford and G.~B.~Preston,
\emph{The algebraic theory of semigroups}, Vol.  II,
Amer. Math. Soc. Surveys {\bf 7}, Pro\-vi\-den\-ce, R.I.,   1967.

\bibitem{HBSTT-1984}
W.~W.~Comfort, \emph{Topological groups},
Handbook of set-theoretic
topology, Kunen K., Vaughan J. (eds.) Elsevier, 1984, pp.
1143--1263.

\bibitem{Eberhart-Selden=1969}
C. Eberhart and J. Selden,
\emph{ On the closure of the bicyclic semigroup},
Trans. Amer. Math. Soc. {\bf 144} (1969), 115--126.
DOI: 10.1090/S0002-9947-1969-0252547-6

\bibitem{Engelking=1989}
R.~Engelking,
\emph{General topology}, 2nd ed., Heldermann, Berlin, 1989.

\bibitem{Fihel-Gutik-2011}
I. R. Fihel and O. V. Gutik,
\emph{On the closure of the extended bicyclic semigroup},
Carpathian Math. Publ. \textbf{3} (2011), no.~2, 131--157.

\bibitem{Fortunatov-1976}
V. A. Fortunatov,
\emph{Congruences on simple extensions of semigroups},
Semigroup Forum \textbf{13} (1976), 283--295.\\
DOI: 10.1007/BF02194949

\bibitem{Fotedar-1974}
G.~L.~Fotedar, \emph{On a semigroup associated with an ordered
group}, Math. Nachr. \textbf{60} (1974), 297--302.\\
DOI: 10.1002/mana.19740600128

\bibitem{Fotedar-1978}
G.~L.~Fotedar, \emph{On a class of bisimple inverse semigroups},
Riv. Mat. Univ. Parma (4) \textbf{4} (1978), 49--53.


\bibitem{Gutik=2016}
O. Gutik,
\emph{Topological property of the Taimanov semigroup},
Math. Bull. T. Shevchenko Sci. Soc. \textbf{13} (2016), 1--5.

\bibitem{Gutik-2018}
O. Gutik,
\emph{On locally compact semitopological $0$-bisimple inverse $\omega$-semigroups,}
Topol. Algebra Appl. \textbf{6} (2018), 77--101.
DOI: 10.1515/taa-2018-0008

\bibitem{Gutik-2024}
O. Gutik,
\emph{On non-topologizable semigroups},
Вісник Львів. ун-ту, Сер. мех.-мат. \textbf{96} (2024), 25--36. \\
DOI: 10.30970/vmm.2023.96.025-036

\bibitem{Gutik-2026}
O. Gutik,
\emph{On semigroups which admit only discrete left-continuous Hausdorff topology},
Preprint.

\bibitem{Gutik-Maksymyk=2016}
O. V. Gutik and K. M. Maksymyk,
\emph{On semitopological bicyclic extensions of linearly ordered groups},
Мат. методи  фіз.-мех. поля \textbf{59} (2016), no.~4, 31--43; reprinted version:
J. Math. Sc. \textbf{238} (2019), no. 1, 32--45. \\
DOI: 10.1007/s10958-022-06058-6

\bibitem{Gutik-Pagon-Pavlyk=2011}
O. Gutik, D. Pagon, and K. Pavlyk,
\emph{Congruences on bicyclic extensions of a linearly ordered group},
Acta Comment. Univ. Tartu. Math. \textbf{15} (2011), no. 2, 61--80.
DOI: 10.12697/ACUTM.2011.15.10


\bibitem{Haworth-McCoy=1977}
R. C. Haworth and R. A. McCoy, \emph{Baire spaces}, Dissertationes
Math., Warszawa, PWN,  1977. Vol.~\textbf{141}.

\bibitem{Lawson-1998}
M.~V. Lawson,
\emph{Inverse semigroups. The theory of partial symmetries},
Singapore: World Scientific, 1998. \\
DOI: 10.1142/3645




\bibitem{Markov=1945}
A.~A. Markov, \emph{On free topological groups}, Izvestia Akad. Nauk
SSSR \textbf{9} (1945), 3---64 (in Russian); \textbf{English version in:}
Transl. Amer. Math. Soc. \textbf{8} (1962), no. 1, 195---272.

\bibitem{Olshansky=1980}
A.~Yu. Ol'shanskiy, \emph{Remark on counatable non-topologized
groups}, Vestnik Moscow Univ. Ser. Mech. Math. \textbf{39} (1980),
1034 (in Russian).





\bibitem{Petrich-1984}
M.~Petrich,
\emph{Inverse semigroups},
John Wiley $\&$ Sons, New York, 1984.

\bibitem{Pontryagin=1966}
L.~S.~Pontryagin,
\emph{Topological groups},
Gordon $\&$ Breach, New York ets, 1966.


\bibitem{Reilly-1966}
N. R. Reilly,
\emph{Bisimple $\omega$-semigroups, }
Proc. Glasgow Math. Assoc. \textbf{7} (1966), no. 3, 160--167.\\
DOI: 10.1017/S2040618500035346

\bibitem{Ruppert=1984}
W.~Ruppert,
\emph{Compact semitopological semigroups: an intrinsic theory},
Lect. Notes Math., \textbf{1079}, Springer, Berlin, 1984.
DOI: 10.1007/BFb0073675


\bibitem{Taimanov=1973}
A. D. Taimanov,
\emph{An example of a semigroup which admits only the discrete topology},
Algebra Logic \textbf{12} (1973), no.~1, 64--65.
DOI: 10.1007/BF02218642

\bibitem{Taimanov=1975}
A.~D.~Taimanov,
{\it The topologization of commutative semigroups,}
Math. Notes \textbf{17} (1975), no. 5, 443--444. \\
DOI: 10.1007/BF01155800

\bibitem{Warne-1966}
R. J. Warne,
\emph{A class of bisimple inverse semigroups},
Pacif. J. Math. \textbf{18} (1966), no. 3, 563--577. \\
DOI: 10.2140/pjm.1966.18.563

\bibitem{Warne-1967}
R. J. Warne,
\emph{Bisimple inverse semigroups mod groups},
Duke Math. J. \textbf{34} (1967), no. 4, 787--812.\\
DOI: 10.1215/S0012-7094-67-03481-3

\end{thebibliography}
\end{document}